\newtheorem{theorem}{Theorem}
\newtheorem{lemma}[theorem]{Lemma}
\newtheorem{corollary}{Corollary}
\newtheorem{definition}{Definition}
\newcommand\widebar[1]{\mathop{\overline{#1}}}
\newtheorem{question}[theorem]{Question}
\begin{document}

\begin{center}{\large{\bf Diffusion: Quiescence and Perturbation}}

{Todd Mullen$^{1}$, Richard Nowakowski${^1}$, Danielle Cox${^2}$}

\end{center}
\vspace{0.5cm}

 \begin{small}
	\noindent  $^{1}$Department of Mathematics and Statistics, Dalhousie University, Halifax, CANADA\\
	 $^{2}$Department of Mathematics,  Mount Saint Vincent University, Halifax, CANADA.\\
		\end{small}
		\vspace{0.5cm}
		
{{\bf Keywords}: chip-firing, discrete-time process, periodicity}


\begin{abstract}
Originally proposed by Duffy et al., Diffusion is a variant of chip-firing in which chips from flow from places of high concentration to places of low concentration. In the variant, Perturbation Diffusion, the first step involves a ``perturbation" in which some number of vertices send chips to each of their respective neighbours even though the rules of Diffusion only permit for chips to be sent from richer vertices to poorer vertices. Perturbation Diffusion allows us to expand our study of Diffusion by asking new questions such as ``Given an initial configuration, which vertices, when perturbed, will return the initial configuration after some number of steps in Diffusion." We give some results in this paper that begin to answer this question in the specific case of every vertex initially having 0 chips. We characterize some of the ways a graph can reach such a state in Perturbation Diffusion before focusing on paths in particular with more specific results. 
\end{abstract}

\section{Introduction}

In this paper, we inspect a model regarding the distribution and transfer of chips in a graph. We define our model to be such that every vertex sends a dollar simultaneously to every neighbouring vertex that is poorer than itself, and that this transfer of chips happens continuously, over and over again, ad infinitum. This model is called Diffusion and was introduced on graphs by \cite{duffy:}. We turn our focus to those particular examples in which the number of chips in the system is divisible by the number of people, and the model will eventually lead to a situation in which every vertex winds up with the same number of chips. We will expand on the work of Duffy et al. by introducing a situation in which a vertex will send chips to a neighbour that is neither poorer nor richer than itself. This slight change to the rules laid out by Duffy et al. will be the catalyst for the results in this paper.

Introduced by \cite{duffy:}, Diffusion is a process defined on a simple finite graph, $G$, in which each vertex is assigned an integer to represent the size of a stack of chips. At each time step, the chips are redistributed via the following rules. If a vertex $v$ is adjacent to a vertex $u$ with fewer chips, $v$ takes a chip from its stack and adds it to the stack of $u$. We say that $v$ \textit{sends} a chip and $u$ \textit{receives} a chip. When a vertex sends a chip, we say it \textit{fires}. An example of Diffusion is provided in Figure~\ref{fig:parex}. In Figure~\ref{fig:parex}, we see at each time step, the vertices of $P_5$ have a stack size. This assignment of stack sizes to the vertices of a graph is referred to as a \textit{configuration}. 

In this paper, we define and analyze the Diffusion variant, Perturbation Diffusion. We define Perturbation Diffusion on a graph $G$ as the variant of Diffusion where the first firing (which takes place at step 0 and is referred to as the \textit{initial firing}) is such that for some $H \subseteq V(G)$, for each vertex $v \in H$, $v$ sends a chip to each of its neighbours. After the initial firing, Perturbation Diffusion is identical to Diffusion. 

We refer to the configuration in which every vertex has 0 chips as the \textit{0-configuration}. Note that \cite{duffy:} refer to any configuration in which every stack size is equal as ``fixed." Let $G$ be a graph with the 0-configuration and $H$ be a subset of $V(G)$, a \textit{perturbation} of $H$ is when the vertices of $H$ each send a chip to each of their respective neighbours in $G$. In this case, we call $H$ a \textit{perturbation subset} (an example of a perturbation is shown in Figure~\ref{fig:quantumexample}). Let $Seq(C_0)= (C_0,C_1,C_2, \dots)$ be the configuration sequence on a graph $G$ with initial configuration $C_0$. The positive integer $p$ is a \textit{period length} if $C_t = C_{t+p}$ for all $t \geq N$ for some $N$. In this case, $N$ is a \textit{preperiod length}. For such a value, $N$, if $k \geq N$, then we say that the configuration, $C_k$, is \textit{inside} the period. 
For the purposes of this paper, all references to period length will refer to the \textit{minimum period length $p$} in a given configuration sequence. Also, all references to preperiod length will refer to the \textit{least preperiod length} that yields that minimum period length $p$ in a given configuration sequence. 

Note how Diffusion allows a vertex to go into debt if adjacent to more poorer vertices than it has chips. This is notable because otherwise Diffusion would necessarily exhibit periodic behaviour given that it runs for infinitely many time steps and each vertex would only have finitely many possible stack sizes between 0 and the total number of chips in the model. It is true that Diffusion is eventually periodic, but this result is far from trivial. \cite{long:} showed that given any configuration on any graph, the minimum period length is always either 1 or 2. This result however, provides no insight into the length of pre-periods. So, in attempting to determine the number of steps between a perturbation and the beginning of the period, we will not be able to lean on this previous result.

\cite{duffy:}, showed that Diffusion is such that an addition of some constant $k$, $k \in \mathbb{Z}$, to each stack size will have no effect on determining when and if a chip will move from one vertex to another. 

So if one wanted to view Diffusion as a process in which stack sizes are never negative, one would only need to add a sufficient constant $k$, $k \in \mathbb{N}$, to each stack size. \cite{carlotti:} showed how to find such a constant in their paper.

\begin{definition}
	Let $G$ be a graph and let $H$ be a subset of $V(G)$.
	
	\begin{itemize}
		\item We say $H$ is \textbf{$0$-invoking} if a perturbation of $H$ eventually results in a $0$-configuration. 
		\item We say $H$ is \textbf{$0_2$-invoking} if after a perturbation of $H$ and the subsequent firing, the resulting configuration is the 0-configuration.
		\item The \textbf{perturbation quiescent number} of a graph $G$, denoted \textbf{PQ$(G)$}, is the size of the smallest nontrivial $0$-invoking subset of $V(G)$. So, $PQ(G) = min\{|H|: H \neq \emptyset$ is $0$-invoking in $G\}$. 
		\item The \textbf{2-perturbation quiescent number} or \textbf{PQ$_2(G)$} is the size of the smallest nontrivial $0_2$-invoking subset of $V(G)$. So, $PQ_2(G) = min\{|H|: H \neq \emptyset$ is $0_2$-invoking in $G\}$.
	\end{itemize}
\end{definition}

Note that $PQ(G)$ and $PQ_2(G)$ are well-defined because $V(G)$ is itself both a $0$-invoking subset and a $0_2$-invoking subset of $V(G)$. 

\vspace{0.5cm} 

In this paper, we study the following question: Which perturbation subsets will yield a period of length 1? We first characterize all $0_2$-invoking subsets on all graphs in Theorem~\ref{thm:qdominatingiff} and then leave open the problem of determining if there exists a perturbation subset on any graph $G$ that is $0$-invoking and not $0_2$-invoking. We then turn our attention to paths in particular. In Theorem~\ref{thm:qdominatingpath}, we count the number of $0_2$-invoking subsets that exist on $P_n$ for all $n \geq 1$. In Theorem~\ref{thm:qdominationnumber}, we show that $PQ_2(P_n) = \lceil \frac{n}{3} \rceil$, the same as the domination number, for all $n \geq 1$.

\begin{figure}[H]
	\centering
	\begin{tikzpicture}[-,-=stealth', auto,node distance=1.5cm,
	thick,scale=0.6, main node/.style={scale=0.6,circle,draw, minimum size=1cm, font=\sffamily\Large\bfseries}]                   
	\node[main node, fill=black, label={[red]90:0}] (1) 		                                    	        {$H$};
	\node[main node, fill=black, label={[red]90:0}] (2)                      [right of=1]      		        {$H$};
	\node[main node, label={[red]90:0}] (3)                      [right of=2]                    {}; 
	\node[main node, fill=black, label={[red]90:0}] (4)                      [right of=3]        		    {$H$};
	\node[main node, fill=black, label={[red]90:0}] (5)  	                [right of=4]       			    {$H$}; 
	\node[main node, label={[red]90:0}] (6)	                    [right of=5]                    {}; 
	\node[main node, label={[red]90:0}] (7)  	[below=1cm of 1]        	    {};
	\node[main node, label={[red]90:-1}] (8)  [right of=7]       			    {};
	\node[main node, label={[red]90:2}] (9)  	[right of=8]      			    {};
	\node[main node, label={[red]90:-1}] (10) [right of=9]     			    {};
	\node[main node, label={[red]90:-1}] (11) [right of=10]    			    {};
	\node[main node, label={[red]90:1}] (12) 	[right of=11]      		        {};
	
	\draw
	(1) edge (2)	
	(4) edge  (5)
	(10) edge (11)
	;
	
	\draw [->] (2) edge  (3) (4) edge  (3) (5) edge  (6) (7) edge (8) (9) edge (8) (9) edge (10) (12) edge (11);

	\end{tikzpicture} 
	\caption{Perturbation of the subset $H$ (marked with black vertices) of $V(P_6)$ with directed edges depicting the flow of chips: first from vertices of $H$ to vertices of $V(P_6) \setminus H$, and then from richer vertices to poorer vertices}
	\label{fig:quantumexample} 
\end{figure}
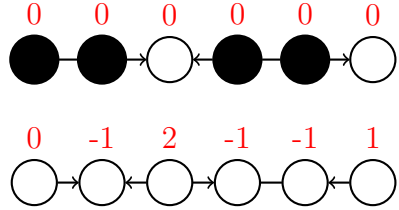 

\FloatBarrier

We now offer more formal definitions regarding Diffusion. The assigned value of a vertex $v$ in a configuration $C$ is its \textit{stack size in $C$} and is denoted $|v|^C$. We omit the superscript when the configuration is clear.
A vertex $v$ is said to be \textit{richer} than another vertex $u$ in configuration $C$ if $|v|^C > |u|^C$. In this instance, $u$ is said to be \textit{poorer} than $v$ in $C$. If $|v|^C<0$, we say $v$ is \textit{in debt in $C$}. In Diffusion, the stack size of a vertex, $v$, at step $t$, is referred to as its \textbf{stack size at time $t$}. If the initial configuration is $C$, then the stack size of $v$ at time $t$ is denoted $|v|_t^C$. This implies that $|v|^C = |v|_0^C$. We omit the superscript when the configuration is clear.

In Diffusion, given a graph $G$ and a configuration $C$ on $G$, to \textit{fire} $C$ is to decrease the stack size of every vertex $v \in V(G)$ by the number of poorer neighbours $v$ has and increase the stack size of $v$ by the number of richer neighbours $v$ has. More formally, for all $v$, let $Z_{-}^{C}(v) = \{u \in N(v) : |v|^{C} > |u|^{C}\}$ and let $Z_{+}^{C}(v) = \{u \in N(v) : |u|^{C} > |v|^{C}\}$. Then, firing results in every vertex $v$ changing from a stack size of $|v|^{C}$ to a stack size of $|v|^{C} + |Z_{+}^{\:C}(v)| - |Z_{-}^{C}(v)|$. Given a set of vertices $A \subseteq V(G)$, the subgraph induced by $A$ will be denoted as \textit{$G|_A$}.

\begin{figure}[H]
	\[
	\begin{tikzpicture}[-,-=stealth', auto,node distance=1.5cm,
	thick,scale=0.6, main node/.style={scale=0.6,circle,draw}]
	
	\node[main node, label={[red]90:0}] (1) 					    {$v_5$};					
	\node[main node, label={[red]90:2}] (2)  [right of=1]        {$v_4$};
	\node[main node, label={[red]90:0}] (3)  [right of=2]        {$v_3$};  
	\node[main node, label={[red]90:4}] (4) 	[right of=3]        {$v_2$};					
	\node[main node, label={[red]90:1}] (5)  [right of=4]        {$v_1$}; 
	
	\path[every node/.style={font=\sffamily\small}]		
	
	(1) edge node [] {} (2)			
	(2) edge node [] {} (3)
	(3) edge node [] {} (4)
	(4) edge node [] {} (5);

	\end{tikzpicture} \]

\begin{table}[H] 
	\centering
	\begin{tabular}{|c|c|c|c|c|c|}
		\hline
		        & $v_5$ & $v_4$ & $v_3$ & $v_2$ & $v_1$\\
		\hline
		 Step 0 & 0 & 2 & 0 & 4 & 1\\
		\hline
		 Step 1 & 1 & 0 & 2 & 2 & 2\\
		\hline
		 Step 2 & 0 & 2 & 1 & 2 & 2\\
		\hline
		 Step 3 & 1 & 0 & 3 & 1 & 2\\
		\hline
		 Step 4 & 0 & 2 & 1 & 3 & 1\\
		\hline
		 Step 5 & 1 & 0 & 3 & 1 & 2\\
		\hline
		 Step 6 & 0 & 2 & 1 & 3 & 1 \\
		\hline
	\end{tabular}
\end{table}
	\caption{Stack sizes during several steps in a Diffusion process on $P_5$}
	\label{fig:parex}
\end{figure}
\FloatBarrier

\section{$0_2$-invoking}

In this section, we characterize all $0_2$-invoking subsets of a graph $G$. We begin by defining a new concept \textit{complementary component dominance} and then show, with Theorem~\ref{thm:qdominatingiff}, that a subset of vertices in a graph is $0_2$-invoking if and only if it is complementary component dominant. 
\begin{definition}

Given a graph $G$, a subset $H$ of $V(G)$ is \textbf{Complementary Component Dominant} or \textbf{CCD} if both
		
\begin{enumerate}[(i)]

		\item For all adjacent pairs of vertices, $x,y \in H$, the number of neighbours of $x$ in $V(G) \setminus H$ is equal to the number of neighbours of $y$ in $V(G) \setminus H$
		
		and 
		
		\item For all adjacent pairs of vertices, $u,v \in V(G) \setminus H$, the number of neighbours of $u$ in $H$ is equal to the number of neighbours of $v$ in $H$.
		
\end{enumerate}
\end{definition}

Note that this definition implies that if $H$ is complementary component dominant in $G$, then so is $V(G) \setminus H$. 

\begin{theorem}\label{thm:qdominatingiff}
	Let $G$ be a graph with the fixed configuration. A subset $H$ of $V(G)$ is $0_2$-invoking in $G$ if and only if $H$ is CCD.
\end{theorem}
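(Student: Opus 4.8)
The plan is to compute the configuration produced by the perturbation, analyze a single Diffusion firing of it, and then extract CCD from the requirement that this firing returns the $0$-configuration. Starting from all zeros, a vertex $w$ receives exactly one chip from each neighbour lying in $H$ and, if $w\in H$, sends one chip along each of its $\deg(w)$ incident edges. Hence after the perturbation the stack sizes are
\[
|w| = \begin{cases} -\,|N(w)\setminus H| & w \in H,\\[2pt] |N(w)\cap H| & w \notin H. \end{cases}
\]
Two features drive everything: every vertex of $H$ has non-positive stack while every vertex of $V(G)\setminus H$ has non-negative stack, and along any edge joining $H$ to $V(G)\setminus H$ the $H$-endpoint is strictly poorer (stack $\le 0$ versus stack $\ge 1$). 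I would also observe that CCD condition (i) says precisely that adjacent vertices of $H$ receive equal post-perturbation stacks, and condition (ii) says the same for adjacent vertices of $V(G)\setminus H$, since the stack of an $H$-vertex is $-|N(\cdot)\setminus H|$ and that of a non-$H$-vertex is $|N(\cdot)\cap H|$.

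For the forward direction (CCD $\Rightarrow 0_2$-invoking) I would simply fire. Fix $v\in H$. By the edge observation each of its $|N(v)\setminus H|$ neighbours outside $H$ is richer, and by condition (i) each neighbour inside $H$ has an equal stack, hence is neither richer nor poorer; so firing changes $|v|$ by $+|N(v)\setminus H|$, returning it to $0$. The computation for $v\notin H$ is symmetric, using condition (ii): every neighbour in $H$ is poorer and every neighbour outside $H$ is tied, so the net change is $-|N(v)\cap H|$, again landing on $0$. Thus the fired configuration is identically $0$.

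For the converse I would run the firing in general and read off what the target value $0$ forces. For $v\in H$ the external neighbours always contribute $+|N(v)\setminus H|$ to the net change, so the value at $v$ after firing equals $-|N(v)\setminus H|+|N(v)\setminus H|+(\text{internal richer})-(\text{internal poorer})$, which is $0$ if and only if, among the neighbours of $v$ inside $H$, the number richer than $v$ equals the number poorer than $v$; a symmetric balance condition holds for each $v\notin H$ using its neighbours outside $H$. The main obstacle is promoting this purely local balance to the global equalities demanded by CCD. I expect to handle it with an extremal argument on $G|_{H}$ (and likewise on $G|_{V(G)\setminus H}$): regard the post-perturbation stack as a potential on the vertices and take a vertex $v$ of maximum potential within a connected component. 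Then $v$ has no richer neighbour, so by the balance condition it has no poorer neighbour either, whence every neighbour shares its potential; propagating through the component forces the potential to be constant there. Constancy on each component of $G|_{H}$ and of $G|_{V(G)\setminus H}$ is exactly CCD conditions (i) and (ii), which closes the equivalence.
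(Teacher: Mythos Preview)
Your proof is correct and follows essentially the same approach as the paper: compute the post-perturbation stacks, use the sign structure to show cross-edges always carry chips back from $V(G)\setminus H$ to $H$ at the next firing, and then run an extremal argument inside each component of $G|_H$ (and of $G|_{V(G)\setminus H}$) to force constancy. The only cosmetic difference is that you take a maximum and propagate directly, while the paper picks a minimum and argues by contradiction; your explicit formulation of the intermediate ``internal richer $=$ internal poorer'' balance condition is a bit cleaner than the paper's version.
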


\begin{proof}
	($\Leftarrow$) Let a graph $G$ have the fixed configuration. Suppose $H \subseteq V(G)$ is CCD. In Figure~\ref{fig:q-dominating}, we see $G|_H$ and $G|_{V(G) \setminus H}$ separated into their respective connected components.

	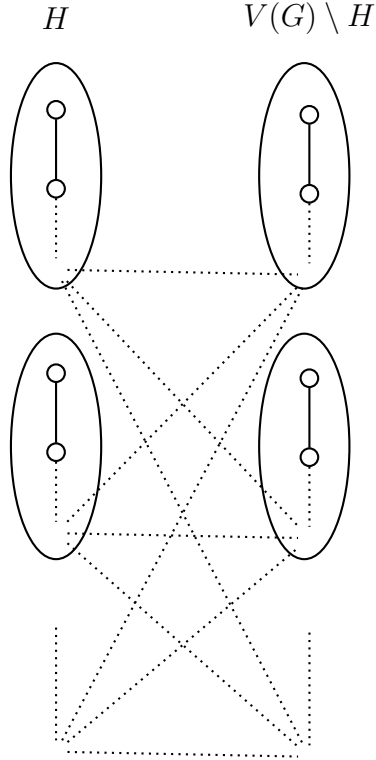
\begin{figure}[H]
		\centering

		\begin{tikzpicture}[-,-=stealth', auto,node distance=1.5cm,
		thick,scale=0.6, main node/.style={scale=0.6,circle,draw}]
		\node[draw=none, fill=none] (1000)                          {$H$}; 
		\node[draw=none, fill=none] (1001)  [right=2cm of 1000]     {$V(G) \setminus H$};         
		\node[main node] (1) 				  [below=0.8cm of 1000]	  {};				
		\node[main node] (2)                [below=0.8cm of 1001]   {};
		\node[main node] (3) 			      [below=0.8cm of 1]	  {};				
		\node[main node] (4)                [below=0.8cm of 2]      {};
		\node[draw=none, fill=none] (1002)  [below=0.8cm of 3]        {};  
		\node[draw=none, fill=none] (1003)  [below=0.8cm of 4]        {};
		\node[main node] (5) 				  [below=1.1cm of 1002]	  {};				
		\node[main node] (6)                [below=1.1cm of 1003]   {};
		\node[main node] (7) 			      [below=0.8cm of 5]	  {};				
		\node[main node] (8)                [below=0.8cm of 6]      {};
		\node[draw=none, fill=none] (1004)  [below=0.8cm of 7]        {};  
		\node[draw=none, fill=none] (1005)  [below=0.8cm of 8]        {};  
		\node[draw=none, fill=none] (1006)  [below=0.8cm of 1004]        {};  
		\node[draw=none, fill=none] (1007)  [below=0.8cm of 1005]        {};  
		\node[draw=none, fill=none] (1008)  [below=1.5cm of 1006]        {};  
		\node[draw=none, fill=none] (1009)  [below=1.5cm of 1007]        {};

		\draw (1) edge (3) (2) edge (4) (5) edge (7) (6) edge (8);
		
		\draw (0,-3.5) ellipse (1cm and 2.5cm) (5.5,-3.5) ellipse (1cm and 2.5cm)
		(0,-9.5) ellipse (1cm and 2.5cm) (5.5,-9.5) ellipse (1cm and 2.5cm);
		
		\draw ;
		
		\draw [->] ;	
		
		\draw[dotted] (3) edge (1002) (4) edge (1003)
		(7) edge (1004) (8) edge (1005)
		(1006) edge (1008) (1007) edge (1009)
		(1002) edge (1003) (1002) edge (1005)
		(1004) edge (1003) (1004) edge (1005)
		(1008) edge (1003) (1008) edge (1005)
		(1002) edge (1009) (1004) edge (1009)
		(1008) edge (1009);
		
		\end{tikzpicture} 
		\FloatBarrier
		\caption{Graph, $G$, with $0_2$-invoking subset, $H$, of $V(G)$}
		\label{fig:q-dominating}
	\end{figure}

	Remember that when $H$ perturbs, the edges that have both endpoints in $H$ will have chips travelling along them both ways. So, we can equivalently view these edges as not having any chips travelling along them. For all vertices $h$ in $H$, let $deg_{V(G) \setminus H}(h)$ be the number of vertices in $V(G) \setminus H$ that are adjacent to $h$, and for all vertices $g$ in $V(G) \setminus H$, let $deg_{H}(g)$ be the number of vertices in $H$ that are adjacent to $g$. Thus, when every vertex in $H$ sends a chip to each of its neighbours as a result of the perturbation, the resulting configuration (at step $t=1$) leaves every vertex, $h$, in $H$ with a number of chips equal to $0 - deg_{V(G) \setminus H}(h)$. Every vertex, $g$, in $V(G) \setminus H$ would be left with $0 + deg_{H}(g)$ chips. We know from the definition of CCD that every pair of adjacent vertices in $H$ must be adjacent to the same number of vertices in $V(G) \setminus H$. By transitivity, this will extend to entire connected components within $G|_H$. 
	
	Since the definition of CCD also states that the vertices of $V(G) \setminus H$ follow the same rule with every adjacent pair of vertices being adjacent to the same number of vertices in the complement, we get, by transitivity, that this extends to entire connected components in $G|_{V(G) \setminus H}$. Thus at step 1, each connected component of $G|_H$ will have the same stack size and each connected component of $G|_{V(G) \setminus H}$ will have the same stack size. 
	
	At step 1, every vertex in $H$ has a negative stack size and each vertex in the complement has a positive stack size. So, when the vertices fire at step 1, every vertex in $H$ will receive from each of its neighbours in $V(G) \setminus H$ and will not send to or receive from any vertices in $H$. Likewise, every vertex in $V(G) \setminus H$ will send to each of its neighbours in $H$ and will not send to or receive from any vertices in $V(G) \setminus H$. So for each $h \in H$, we get that 
	
	\begin{align*}
	|h|_2 &= |h|_1 + deg_{V(G) \setminus H}(h)\\ 
	&= -deg_{V(G) \setminus H}(h) + deg_{V(G) \setminus H}(h)\\
	&= 0
	\end{align*} 
	
	\noindent and for all $g \in V(G) \setminus H$,
	
	\begin{align*}
	|g|_2 &= |g|_1 - deg_{H}(g)\\ 
	&= deg_{H}(g) - deg_{H}(g)\\
	&= 0
	\end{align*}
	
	Thus, the fixed configuration is restored in the first two steps.
	
	($\Rightarrow$)	Let $H$ be a perturbation subset of $V(G)$. Suppose $H$ is $0_2$-invoking. This means that if the configuration at step $0$ is the fixed configuration, then so is the configuration at step $2$. This implies that the net effect of two steps of firings on each vertex is $+0$. This implies that for all vertices $h$ in $H$, if $h$ receives a chip from a vertex in $H$ during the firing at step $1$, then $h$ must also send a chip to a vertex in $H$ at step $1$ as well. Every vertex in $H$ will necessarily send a chip to each of its neighbours in $V(G) \setminus H$ as a result of the perturbation (at step 0) and will receive from those same vertices in the firing at step $1$. However following the perturbation, for each connected component $H_i$ in $H$, there must exist some vertex in $H_i$ that has no poorer neighbours in $H_i$. So if any chip is sent from a vertex in $H$ to another vertex in $H$ during the firing at step 1, then there will exist at least one vertex $h_i$ that received a chip from a neighbour in $H$, but did not send a chip to a neighbour in $H$. This implies that $h_i$ will have a positive stack size at step 2, having received more chips in the firing at step 1 than it sent in the initial firing. This, however, contradicts our assumption that $H$ is $0_2$-invoking. Thus, we can conclude that every vertex in a connected component in $G|_H$ has a common stack size after the initial firing. This implies that each vertex belonging to the same connected component in $G|_H$ shares the same number of neighbours in $V(G) \setminus H$. A similar argument will show the result for vertices in $V(G) \setminus H$. Thus, we can conclude that all $0_2$-invoking subsets are $CCD$.
	
\end{proof}

\begin{corollary}\label{cor:iffCCD}
If $H$ is $0_2$-invoking in $G$, then so is $V(G) \setminus H$. 
\end{corollary}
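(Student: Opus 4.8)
The plan is to deduce this immediately from the characterization in Theorem~\ref{thm:qdominatingiff}, using the fact that the CCD property is self-dual under complementation. First I would invoke the forward direction ($\Rightarrow$) of Theorem~\ref{thm:qdominatingiff}: since $H$ is $0_2$-invoking, $H$ is CCD. The goal then reduces to showing that $V(G)\setminus H$ is CCD as well, for then the reverse direction ($\Leftarrow$) of the same theorem yields that $V(G)\setminus H$ is $0_2$-invoking.

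The key step is to observe that the two clauses in the definition of CCD merely interchange their roles when $H$ is replaced by its complement. Writing $\widebar{H}=V(G)\setminus H$, condition (i) applied to $\widebar{H}$ demands that any two adjacent vertices of $\widebar{H}$ have equally many neighbours in $V(G)\setminus\widebar{H}=H$, which is exactly condition (ii) for $H$. Symmetrically, condition (ii) for $\widebar{H}$ demands that adjacent vertices of $H$ have equally many neighbours in $\widebar{H}$, which is exactly condition (i) for $H$. Hence ``$H$ is CCD'' and ``$\widebar{H}$ is CCD'' are literally the same pair of statements with the two clauses swapped, so one holds precisely when the other does. This is the observation already recorded immediately after the definition of CCD, and I would cite it rather than re-derive it.

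Chaining these facts gives the corollary in a single line: $H$ is $0_2$-invoking $\Rightarrow$ $H$ is CCD $\Rightarrow$ $\widebar{H}$ is CCD $\Rightarrow$ $\widebar{H}$ is $0_2$-invoking. I do not expect a genuine obstacle here, since the entire content is carried by Theorem~\ref{thm:qdominatingiff} together with the symmetric form of the CCD definition. The only point requiring any care is the bookkeeping in degenerate cases (for instance when $H$ or $\widebar{H}$ is empty or induces no edges), but there the relevant CCD conditions are vacuous, so the equivalence of the two clauses still goes through without exception and the deduction remains valid.
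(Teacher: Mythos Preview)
Your proposal is correct and matches the paper's approach exactly: the corollary is stated without an explicit proof in the paper, relying implicitly on Theorem~\ref{thm:qdominatingiff} together with the remark immediately following the definition of CCD that $H$ is CCD if and only if $V(G)\setminus H$ is. Your chain of implications is precisely the intended justification.
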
 

Note that not all graphs have a proper non-trivial $0_2$-invoking subset. In Figure~\ref{fig:notallgraphs}, we see such a graph. This can be justified by first supposing that $v_2$ were in a such a subset. Now note that either $v_5$ or $v_6$ must be in such a subset. If this subset contains both $v_2$ and $v_6$, then $v_5$ is adjacent to two vertices in $H$ and $v_3$ is adjacent to only one vertex in $H$. So, $v_3$ must be in $H$. Now, $v_4$ must be in $H$ since it is adjacent to only one vertex in $H$ while $v_5$ is adjacent to 3 vertices in $H$. Now, $v_1$ must be in $H$ because $v_4$ is adjacent to 2 vertices in $H$ while $v_3$ is only adjacent to 1. Now, we have finally reached a contradiction as $v_1$ is adjacent to 0 vertices in $V(G) \setminus H$. The other cases can be shown similarly.

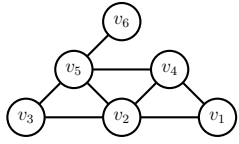
\begin{figure}[H]
	\centering	
	\begin{tikzpicture}[-,-=stealth', auto,node distance=1.5cm,
	thick,scale=0.6, main node/.style={scale=0.6,circle,draw}]
	
	\node[main node] (1)         	         {$v_6$};					
	\node[main node] (2)  [below left of=1]        {$v_5$};
	\node[main node] (3)  [below right of=1]        {$v_4$};  
	\node[main node] (4)  [below left of=2]        {$v_3$};					
	\node[main node] (5)  [below right of=2]        {$v_2$};
	\node[main node] (6)  [below right of=3]        {$v_1$};	 
	
	\path[every node/.style={font=\sffamily\small}]		
	
	(1) edge node [] {} (2)			
	(2) edge node [] {} (3)
	(2) edge node [] {} (4)
	(2) edge node [] {} (5)
	(3) edge node [] {} (5)
	(3) edge node [] {} (6)
	(4) edge node [] {} (5)
	(5) edge node [] {} (6);

	\end{tikzpicture}
	\caption{Graph with no proper nontrivial $0_2$-invoking subsets}
	\label{fig:notallgraphs}
\end{figure}

We must define some terms to clarify our remaining corollaries, lemmas, and questions regarding $0_2$-invoking subsets.

Given a graph $G$, a \textit{dominating set} is a subset $D$ of $V(G)$ such that every vertex in $V(G)$ is either in $D$ or adjacent to a vertex in $D$. The domination number of a graph $G$,  \textit{$\gamma(G)$}, is the size of the smallest dominating set in $G$. A \textit{minimal dominating set} is a dominating set $M$ such that if any vertex were removed from $M$, then the resulting set would not be dominating.  
Given a graph $G$, an \textit{independent set} is a subset $I$ or $V(G)$ such that no pair of vertices in $I$ are adjacent in $G$.

Note that the graph in Figure~\ref{fig:notallgraphs} has a domination number of 2, while the size of the smallest nontrivial $0_2$-invoking subset of its vertices is 6. This shows us that the $2$-perturbation quiescent number of a graph $G$, $PQ_2(G)$, is not necessarily equal to the domination number $\gamma(G)$.

\begin{corollary}\label{cor:02=>dominating}
	For all graphs $G$, all nontrivial $0_2$-invoking subsets of $V(G)$ are also dominating sets.
\end{corollary}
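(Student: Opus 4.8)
The plan is to invoke the characterization in Theorem~\ref{thm:qdominatingiff}, which tells us that a nontrivial $0_2$-invoking subset $H$ is exactly a nonempty CCD subset, and then to argue by contradiction that such an $H$ must dominate $G$. (As is standard in this setting, I would take $G$ to be connected; on a disconnected graph the statement actually fails, since e.g. on two disjoint edges the set consisting of one whole edge is CCD, hence $0_2$-invoking, yet misses the other component and so is not dominating. So connectivity is genuinely needed and I treat it as the standing hypothesis.) Suppose then that $H$ is nonempty and CCD but not dominating. Then there is a vertex $g \in V(G) \setminus H$ with no neighbour in $H$, that is, $deg_H(g) = 0$.

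The heart of the argument is to propagate this ``$deg_H = 0$'' condition across the complement using CCD condition (ii). That condition only directly compares \emph{adjacent} pairs of vertices of $V(G)\setminus H$, asserting they have equal numbers of neighbours in $H$; walking along any path inside $G|_{V(G)\setminus H}$ therefore keeps $deg_H$ constant, so $deg_H$ is constant on each connected component of $G|_{V(G)\setminus H}$. Letting $K$ be the component of $g$ in $G|_{V(G)\setminus H}$ and using $deg_H(g)=0$, I conclude that every vertex of $K$ has no neighbour in $H$.

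From here I would show $K$ is in fact a connected component of the whole graph $G$. Every $G$-neighbour of a vertex of $K$ lies either in $H$ --- impossible, since no vertex of $K$ touches $H$ --- or in $V(G)\setminus H$, in which case maximality of the component $K$ places it inside $K$. Hence no vertex of $K$ has a neighbour outside $K$, so $K$ is a component of $G$. By connectivity $K = V(G)$, but $K \subseteq V(G)\setminus H$ forces $H = \emptyset$, contradicting nontriviality. This contradiction shows $H$ is dominating.

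The main obstacle is the propagation step: because CCD condition (ii) is only an equality between adjacent pairs, the statement ``$deg_H \equiv 0$ on the component of $g$'' must be earned by transitivity along paths, exactly the same ``extends to entire connected components'' move used in the forward direction of the proof of Theorem~\ref{thm:qdominatingiff}. Everything else is bookkeeping, so I expect the only real care to go into that propagation and into the (implicit) connectivity hypothesis flagged above.
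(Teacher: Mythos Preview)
Your proof is correct and follows essentially the same route as the paper: invoke Theorem~\ref{thm:qdominatingiff} to get CCD, then use condition (ii) to force every vertex of $V(G)\setminus H$ to have a neighbour in $H$. The paper compresses the propagation step into a single sentence (``by the definition of CCD, every vertex in the complement of $H$ must be adjacent to at least one vertex in $H$ unless $H$ is empty''), whereas you spell out the transitivity-along-components argument and, in doing so, correctly flag the implicit connectivity hypothesis that the paper leaves unstated.
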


\begin{proof}
	Let $G$ be a graph and let $H \subseteq V(G)$ be a nontrivial $0_2$-invoking subset. By Theorem~\ref{thm:qdominatingiff}, $H$ is CCD. By the definition of CCD, every vertex in the complement of $H$ must be adjacent to at least one vertex in $H$ unless $H$ is empty. Since $H$ is nontrivial, $H$ is dominating. 
\end{proof}

From \cite{brandstadt:}, an \textit{efficient dominating set}, or \textit{perfect code}, is an independent subset, $A$, of the vertex set of a graph, $G$, such that every vertex in $V(G) \setminus A$ is adjacent to exactly one vertex in $A$.

\begin{corollary}
	Efficient dominating sets (or perfect codes) are CCD and thus $0_2$-invoking.
\end{corollary}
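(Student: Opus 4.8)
The plan is to verify the two defining conditions of CCD directly for an efficient dominating set $A$, and then invoke Theorem~\ref{thm:qdominatingiff} to conclude that $A$ is $0_2$-invoking. Since the work is purely a matter of unpacking definitions, I would set $H = A$ and check conditions (i) and (ii) in turn.

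For condition (i), the key observation is that it is \emph{vacuously} satisfied: the definition of an efficient dominating set requires $A$ to be independent, so there are no adjacent pairs of vertices $x, y \in A$ at all. Hence the universally quantified statement over adjacent pairs in $H = A$ holds trivially, with nothing to check. This is the cleanest part of the argument and I would dispatch it in a single sentence.

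For condition (ii), I would use the defining property of a perfect code: every vertex in $V(G) \setminus A$ is adjacent to \emph{exactly one} vertex in $A$. Thus for any vertex $u \in V(G) \setminus A$ we have (in the notation of the CCD definition) that the number of neighbours of $u$ in $A$ equals $1$. In particular, for any adjacent pair $u, v \in V(G) \setminus A$, both quantities equal $1$ and are therefore equal. This establishes condition (ii). With both conditions verified, $A$ is CCD, and Theorem~\ref{thm:qdominatingiff} immediately gives that $A$ is $0_2$-invoking.

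I do not anticipate a genuine obstacle here, since the statement is a direct corollary of the characterization in Theorem~\ref{thm:qdominatingiff}. If anything, the only point requiring a moment's care is making explicit \emph{why} condition (i) need not be argued (independence of $A$) rather than silently skipping it; a reader might otherwise expect a nontrivial degree-counting argument on the $A$-side. I would therefore emphasize that both the vacuity of (i) and the constant value $1$ in (ii) are exactly the two halves of the perfect-code definition.
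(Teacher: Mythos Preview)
Your argument is correct and is exactly the intended one: the paper states this corollary without proof, treating it as immediate from the definitions, and your verification of CCD conditions (i) and (ii) via independence of $A$ and the exactly-one-neighbour property is precisely the routine check being left to the reader. There is nothing to add.
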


\begin{lemma} \label{lem:minimaldominating}
	Every minimal dominating set of $P_n$, $n \geq 2$, is CCD and thus, $0_2$-invoking.
\end{lemma}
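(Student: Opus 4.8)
## Proof Proposal

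The plan is to reduce the claim to the characterization already established in Theorem~\ref{thm:qdominatingiff}: since a set is $0_2$-invoking if and only if it is CCD, it suffices to show that every minimal dominating set $M$ of $P_n$ is CCD. Write $P_n = v_1 v_2 \cdots v_n$. The two CCD conditions ask that adjacent vertices inside $M$ agree on their number of neighbours in $V(P_n)\setminus M$, and that adjacent vertices inside $V(P_n)\setminus M$ agree on their number of neighbours in $M$. Because $P_n$ has maximum degree $2$, each of these counts is an integer in $\{0,1,2\}$, so the conditions are highly constrained and should be checkable by a short structural analysis of how a minimal dominating set can look along a path.

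First I would record the key structural fact about minimal dominating sets of a path: minimality forces the selected vertices to be ``spread out.'' Concretely, I expect to argue that no three consecutive vertices $v_{i-1}, v_i, v_{i+1}$ can all lie in $M$ (if they did, $v_i$ would be redundant, since its only neighbours $v_{i-1},v_{i+1}$ are already in $M$ and hence dominated, contradicting minimality), and similarly that $M$ cannot contain two adjacent vertices $v_i, v_{i+1}$ unless each has a private neighbour outside $M$ justifying its presence. The upshot I want is a clean description of the possible adjacency patterns within $M$ and within its complement.

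Next I would verify condition (i). Suppose $x = v_i$ and $y = v_{i+1}$ are adjacent vertices of $M$. Each has degree at most $2$, and the shared edge $xy$ lies inside $M$, so the number of neighbours of $x$ in the complement equals $\deg(x) - (\text{neighbours of } x \text{ in } M)$. Using the structural fact that $M$ contains no run of three consecutive vertices, neither $v_{i-1}$ nor $v_{i+2}$ lies in $M$ (for interior vertices), so each of $x$ and $y$ has exactly one neighbour in the complement; the endpoint cases $i=1$ or $i+1=n$ must be handled separately but yield the same agreement. Condition (ii) I would treat the same way: for adjacent vertices $u,v$ in the complement, minimality of $M$ forces the number of neighbours in $M$ to be equal, the main point being that a long gap of consecutive non-dominated-looking vertices is impossible because $M$ must dominate every vertex, so the complement breaks into short blocks whose internal vertices all have the same $M$-neighbour count.

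The main obstacle I anticipate is the boundary (endpoint) analysis: vertices $v_1$ and $v_n$ have degree $1$, so the degree bookkeeping in both CCD conditions differs there, and minimal dominating sets of short paths ($n = 2,3,4$) behave as small special cases that must be checked by hand to anchor the induction or case analysis. I would therefore state explicitly the list of minimal dominating sets for small $n$, confirm CCD directly for those, and then carry the generic interior argument for larger $n$, being careful that whenever an adjacent pair touches an endpoint the counts still coincide. Once both CCD conditions are confirmed in all cases, Theorem~\ref{thm:qdominatingiff} immediately upgrades CCD to $0_2$-invoking, completing the proof.
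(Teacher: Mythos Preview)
Your approach is essentially the paper's: verify the two CCD conditions directly from the structure of a minimal dominating set on a path, then invoke Theorem~\ref{thm:qdominatingiff}. The paper's proof is a two-line version of exactly this, asserting that any adjacent pair in $H$ has exactly one neighbour each in the complement (minimality) and any adjacent pair in the complement has exactly one neighbour each in $H$ (domination).

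One remark on your anticipated obstacle: the endpoint analysis and the small-$n$ casework you are bracing for are not actually needed. An endpoint can never appear in an adjacent pair inside $M$ (if $v_1,v_2\in M$ then $v_1$ is redundant, contradicting minimality; symmetrically at $v_n$), and an endpoint can never appear in an adjacent pair inside $V(P_n)\setminus M$ (if $v_1,v_2\notin M$ then $v_1$ is undominated). Hence every adjacent pair you must check lives entirely in the interior, where your ``no three consecutive'' observations immediately give the count $1$ on both sides. This lets you run the argument uniformly for all $n\ge 2$ without any induction or base-case enumeration.
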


\begin{proof}
	Let $H$ be a minimal dominating set of $P_n$, $n \geq 2$. We will show that $H$ is CCD. Since $H$ is a minimal dominating set, every pair of adjacent vertices in $V(P_n) \setminus H$ are adjacent to exactly one vertex in $H$ each. Since $H$ is a minimal dominating set, every pair of adjacent vertices in $H$ are adjacent to exactly one vertex in $V(P_n) \setminus H$ each. Thus, $H$ is CCD.
\end{proof}

\begin{question}
	Is there a characterization of minimal dominating sets that are also $0_2$-invoking subsets?
\end{question}

If $\gamma(G) = 1$, then there must be a dominating vertex. This vertex is itself a $0_2$-invoking set. If $\gamma(G) = 2$, with dominating set $\{x,y\}$, then the solution is not so simple. We will break the problem into two cases: $x$ not adjacent to $y$, and $x$ adjacent to $y$.
 Suppose first that $x$ and $y$ are not adjacent. For this pair of vertices to also be a $0_2$-invoking set, it must be true that the set $\{x,y\}$ is also complementary component dominant.

So, every vertex in a given connected component in $G \setminus \{x,y\}$ must be adjacent to the same number of vertices in $\{x,y\}$ (either 1 or 2). Consider the subset of vertices adjacent to $x$ and not adjacent to $y$, call it $V_x$, and the subset of vertices adjacent to $y$ and not adjacent to $x$, call it $V_y$, and the subset of vertices adjacent to both $x$ and $y$, call it $V_{xy}$. In order for $\{x,y\}$ to be complementary component dominant, it must be true that no edges exist between $V_{xy}$ and $V_x \cup V_y$.

Now, if $x$ and $y$ are adjacent,we must also have an additional rule since $\{x,y\}$ is CCD. If $x$ is adjacent to $y$, then we have the additional rule that $|V_x| = |V_y|$ since both $x$ and $y$ must be adjacent to the same number of vertices. Moving to dominating sets of size 3 or greater appears to be much more difficult.

In $K_{n,n}$, $n \geq 1$, minimal dominating sets come in two forms: either one vertex from each partition, or an entire partition. In both instances, these sets are CCD and thus, $0_2$-invoking.

In complete multi-partite graphs, minimal dominating sets come in two forms: either one vertex from two different partitions, or an entire partition. The former is not necessarily CCD, while the latter is necessarily CCD. 

\begin{question}\label{qst:0-invoking}
Is there a graph $G$ such that some subset of $V(G)$ is 0-invoking but not $0_2$-invoking.
\end{question}

\section{Paths}

With a general result characterizing $0_2$-invoking subsets on all graphs, we now focus on paths to show $PQ_2(P_n) = 	\lceil \frac{n}{3} \rceil$, for all $n \geq 1$ (Theorem~\ref{thm:qdominationnumber}), and we determine the number of $0_2$-invoking subsets on a path (Theorem~\ref{thm:qdominatingpath}). 

\subsection{Path Introduction}

Before our results on counting $0_2$-invoking subsets and calculating $PQ_2(P_n)$, we must introduce some definitions and lemmas regarding Diffusion on paths.

Let $G$ be a finite simple undirected graph with vertex set $V(G)$ and edge set $E(G)$. Let $A \subseteq E(G)$. A \textit{graph orientation} of a graph $G$ is a mixed graph obtained from $G$ by choosing an orientation ($x \to y$ or $y \to x$) for each edge $xy$ in $A \subseteq E(G)$. We refer to the edges that are in $E(G)$ \textbackslash $A$ as \textit{flat}. We refer to the assignment of either $x \to y$, $y \to x$, or flat to an edge $xy$ as $xy$'s \textit{edge orientation}. On a path drawn on a horizontal axis, two directed edges in a graph orientation \textit{agree} if they either both point left or both point right.

Let $R$ be a graph orientation of a graph $G$. A \textit{suborientation} $R'$ of $R$ is a graph orientation of some subgraph $G'$ of $G$ such that every edge $xy$ in $G'$ is assigned the same edge orientation as in $R$.

Given two configurations, $C$ and $D$, of a graph $G$, in which the vertices are labelled, $C$ and $D$ are \textit{equal} if $|v|^C = |v|^D$ for all $v \in V(G)$.

In Figure~\ref{fig:parex}, the period length is 2 and the preperiod length is 3.

\begin{lemma}\label{lem:inducego}
	In Diffusion, every configuration induces a graph orientation.
\end{lemma}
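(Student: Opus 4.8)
The plan is to exhibit an explicit map from configurations to graph orientations and then check that it is well-defined; the content of the lemma is essentially definitional, so the work is in setting up the correct convention and invoking trichotomy. Given a configuration $C$ on $G$, I would define the induced orientation edge by edge. For each edge $xy \in E(G)$, compare the stack sizes $|x|^C$ and $|y|^C$: if $|x|^C > |y|^C$, orient the edge $x \to y$ (the direction in which a chip actually travels under a firing, namely from the richer vertex to the poorer); if $|y|^C > |x|^C$, orient it $y \to x$; and if $|x|^C = |y|^C$, declare the edge flat. This matches the firing rule of Diffusion exactly, since $v$ sends a chip to a neighbour $u$ precisely when $|v|^C > |u|^C$, so the non-flat edges record the chip flow and the flat edges record the pairs of equal neighbours across which no chip moves.

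Next I would verify that this assignment is genuinely a graph orientation in the sense defined earlier, namely that it names a subset $A \subseteq E(G)$ of oriented edges together with a direction for each, leaving the remaining edges flat. I would take $A = \{\, xy \in E(G) : |x|^C \neq |y|^C \,\}$. Every edge in $A$ receives exactly one of the two directions by the rule above, and every edge of $E(G) \setminus A$ is flat, so the pair (chosen $A$, chosen directions) is exactly the data of a graph orientation.

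The only point requiring an argument is well-definedness: each edge must be assigned exactly one edge orientation. This is immediate from the trichotomy law for the integers — for any edge $xy$, exactly one of $|x|^C > |y|^C$, $\ |x|^C < |y|^C$, $\ |x|^C = |y|^C$ holds — so the three defining cases are mutually exclusive and jointly exhaustive, and each edge is assigned a unique orientation. Hence $C$ induces a well-defined graph orientation, as claimed.

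There is no serious obstacle here; the substance is choosing and recording the orientation convention correctly. The one point I would state carefully is that the convention is \emph{richer-to-poorer}, aligned with the actual direction of chip flow, because this is what makes the induced orientation useful for the later path arguments and for notions such as two directed edges \emph{agreeing}. I would therefore make the convention explicit in the statement of the induced orientation so that subsequent results may reference it directly.
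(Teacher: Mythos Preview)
Your proposal is correct and follows essentially the same approach as the paper: define the orientation edge by edge via richer-to-poorer, leave equal-stack edges flat, and observe that trichotomy makes the assignment well-defined. The paper phrases the cases in terms of which vertex ``gives a chip'' rather than comparing stack sizes directly, but this is the same rule, and your added remark about explicitly recording the richer-to-poorer convention is sensible.
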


\begin{proof}
	Let $G$ be a graph and $C_t$ a configuration on $G$. For all pairs of adjacent vertices $u$, $v$ in $G$ at step $t$, either $u$ gives a chip to $v$, $v$ gives a chip to $u$, the stack sizes of $u$ and $v$ are equal in $C_t$. Let $uv$ be an edge. Assign directions as follows:
	
	\begin{itemize}
		
		\item If $u$ gives a chip to $v$ at time $t$, assign $uv$ the edge orientation $u \to v$.
		
		\item If $v$ gives a chip to $u$ at time $t$, assign $uv$ the edge orientation $v \to u$.
		
		\item If the stack sizes of $u$ and $v$ are equal at time $t$, do not direct the edge $uv$.
		
	\end{itemize}
	
	Thus, a graph orientation on $G$ results.
	
\end{proof}

We say that this graph orientation is \textit{induced} by $C_t$, the configuration of $G$ at time $t$. We see an example of a graph orientation induced by a configuration in Diffusion in Figure~\ref{fig:underlyingorientation}.

\begin{figure} [H] 
	\centering	
	
	\begin{tikzpicture}[-,-=stealth', auto,node distance=1.5cm,
	thick,scale=0.6, main node/.style={scale=0.6,circle,draw}]
	
	\node[main node, label={[red]90:15}] (1) 					   {$v_1$};						
	\node[main node, label={[red]90:9}] (2)  [right of=1]        {$v_2$};
	\node[main node, label={[red]90:8}] (3)  [right of=2]        {$v_3$};  
	\node[main node, label={[red]90:2}] (4)  [right of=3]        {$v_4$};						
	\node[main node, label={[red]90:12}] (5)  [right of=4]        {$v_5$}; 
	\node[main node, label={[red]90:}] (6) 	[below of=1]				   {$v_1$};						
	\node[main node, label={[red]90:}] (7)  [right of=6]        {$v_2$};
	\node[main node, label={[red]90:}] (8)  [right of=7]        {$v_3$};  
	\node[main node, label={[red]90:}] (9)  [right of=8]        {$v_4$};						
	\node[main node, label={[red]90:}] (10)  [right of=9]        {$v_5$}; 	
	
	\path (1) edge (2) (2) edge (3) (3) edge (4) (4) edge (5);
	
	\draw [->] (6) edge (7) (7) edge (8) (8) edge (9)
	(10) edge (9) ;

	\end{tikzpicture} 
	\FloatBarrier
	
	\caption{Configuration on $P_5$ and its induced graph orientation.}
	\label{fig:underlyingorientation}

\end{figure}
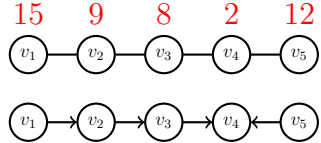

Let $\widebar{Seq(C_0)} = \{ C_t, C_{t+1}, \dots , C_{t+p-1}  \}$ be the ordered set of configurations contained within the period of a configuration sequence $Seq(C_0)$, where $p$ is the length of the shortest period, and the period begins at step $t$.
A configuration $D$ on a graph $G$ is a \textit{period configuration} if $D\in \widebar{Seq}(C)$ for some configuration $C$. A configuration $D$ on a graph $G$ is a \textit{$p_2$-configuration} if $D\in \widebar{Seq}(C)$ for some configuration $C$ and $\widebar{Seq}(C)$ has exactly 2 elements. A \textit{period orientation} is a graph orientation that is induced by a period configuration. A \textit{$p_2$-orientation} is a graph orientation that is induced by a $p_2$-configuration. A \textit{0-orientation} is a graph orientation that is induced by a 0-configuration.

A configuration at step $t$ in a configuration sequence is a \textit{$0$-preposition} if the configuration at step $t+1$ is the  0-configuration. The underlying orientation, $R$, of a configuration is a \textit{$0$-preorientation} if there exists a $0$-preposition which has $R$ as its underlying orientation.  

\begin{lemma}
	Given a graph, $G$, and an orientation $R$, there is at most one configuration which both induces $R$ and is a 0-preposition.
	\label{lem:max1con}	
\end{lemma}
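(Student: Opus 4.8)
The plan is to show that the two requirements---inducing $R$ and being a $0$-preposition---together pin down the stack size at every vertex, leaving no freedom whatsoever. The key observation is that the induced orientation $R$ already records the entire firing: by Lemma~\ref{lem:inducego}, an edge $u \to v$ in $R$ means precisely that $u$ sends a chip to $v$ during the firing, while a flat edge means no chip crosses that edge. Consequently, for any configuration $C$ that induces $R$, the number of chips a vertex $v$ receives in the firing equals its in-degree in $R$, and the number it sends equals its out-degree in $R$; crucially, these two quantities depend only on $R$ and not on the particular stack sizes in $C$.

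First I would fix a vertex $v$ and write its stack size after one firing as $|v|^C + d^{+}_R(v) - d^{-}_R(v)$, where $d^{+}_R(v)$ and $d^{-}_R(v)$ denote the in-degree and out-degree of $v$ in $R$. Because $C$ induces $R$, these coincide with $|Z_{+}^{C}(v)|$ and $|Z_{-}^{C}(v)|$ respectively, which is exactly what makes them functions of $R$ alone. Next I would invoke the hypothesis that $C$ is a $0$-preposition: the configuration one step later is the $0$-configuration, so this post-firing stack size must equal $0$ at every vertex. Solving the resulting equation gives
\[
|v|^C = d^{-}_R(v) - d^{+}_R(v).
\]

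The conclusion is then immediate. The right-hand side is determined by $R$ alone, so the stack size of each vertex is forced by $R$. Hence any two configurations that both induce $R$ and are $0$-prepositions must assign the same value to every vertex, and are therefore equal; there is at most one such configuration, as claimed.

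There is no serious obstacle in this argument. The only point requiring care is the identification in the first step: one must observe that a configuration inducing $R$ fires exactly according to $R$, so that $|Z_{+}^{C}(v)|$ and $|Z_{-}^{C}(v)|$ are the in- and out-degrees of $v$ in $R$ rather than quantities that could vary between different configurations inducing the same orientation. Once this is in hand, uniqueness follows from solving a single linear equation at each vertex, with no global or combinatorial complication.
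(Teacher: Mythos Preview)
Your proof is correct and follows essentially the same approach as the paper: both arguments note that the orientation $R$ determines the in- and out-degree of each vertex, write the post-firing stack size as $|v|^C$ plus in-degree minus out-degree, set this equal to $0$, and solve uniquely for $|v|^C$. Your version is slightly more explicit about why the in- and out-degrees depend only on $R$, but the underlying idea is identical.
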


\begin{proof}
	Let $G$ be a graph and $R$ an orientation of $G$. The orientation $R$ dictates the number of chips that each vertex will give and receive at the next firing. Thus, for each vertex $v_k$ in $G$, the stack size of $v_k$ following the next firing is equal to the current stack size of $v_k$ plus the number of edges directed toward $v_k$, $A_{v_k}$, minus the number of edges directed away from $v_k$, $B_{v_k}$. So, if we have that $|v_k| + A_{v_k} - B_{v_k} = 0$, then the stack size of $v_k$ can be determined because it is the only unknown in the equation.    
\end{proof}

\subsection{Results on Paths}

Now, with sufficient background information, we introduce our two main results on paths with Theorems~\ref{thm:qdominationnumber} and \ref{thm:qdominatingpath}.

\begin{theorem}\label{thm:qdominationnumber}
	$PQ_2(P_n) = \lceil \frac{n}{3} \rceil$, $n \geq 1$.
\end{theorem}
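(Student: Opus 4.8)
The plan is to sandwich $PQ_2(P_n)$ between $\lceil \frac{n}{3} \rceil$ from below and from above, relying entirely on the characterization already established. The one external ingredient I would invoke is the classical fact that the domination number of a path satisfies $\gamma(P_n) = \lceil \frac{n}{3} \rceil$; everything else follows from the corollaries and lemma proved above. So the real content of the argument is just connecting $0_2$-invoking subsets to dominating sets, which the earlier results already do for us.

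For the lower bound, I would argue as follows. Let $H$ be any nontrivial $0_2$-invoking subset of $V(P_n)$. By Corollary~\ref{cor:02=>dominating}, $H$ is a dominating set, so $|H| \geq \gamma(P_n) = \lceil \frac{n}{3} \rceil$. Taking the minimum over all such $H$ gives $PQ_2(P_n) \geq \lceil \frac{n}{3} \rceil$ directly from the definition of the 2-perturbation quiescent number. This step is immediate once Corollary~\ref{cor:02=>dominating} is in hand.

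For the upper bound, the plan is to exhibit a $0_2$-invoking subset of size exactly $\lceil \frac{n}{3} \rceil$. Fix $n \geq 2$ and let $D$ be a \emph{minimum} dominating set of $P_n$, so that $|D| = \gamma(P_n) = \lceil \frac{n}{3} \rceil$. A minimum dominating set is automatically a minimal dominating set, since deleting any vertex while preserving domination would contradict minimality of $|D|$. Hence Lemma~\ref{lem:minimaldominating} applies and tells us that $D$ is CCD, so by Theorem~\ref{thm:qdominatingiff} the set $D$ is $0_2$-invoking. Since $D$ is nonempty and has size $\lceil \frac{n}{3} \rceil$, we conclude $PQ_2(P_n) \leq \lceil \frac{n}{3} \rceil$. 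Combining the two bounds yields equality for all $n \geq 2$.

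The only genuinely separate piece of bookkeeping is the base case $n = 1$, where Lemma~\ref{lem:minimaldominating} does not apply. Here I would argue directly that the single vertex of $P_1$ forms a CCD set: the two defining conditions of CCD are vacuous because there are no adjacent pairs inside the singleton and the complement is empty, so the singleton is $0_2$-invoking by Theorem~\ref{thm:qdominatingiff}, giving $PQ_2(P_1) = 1 = \lceil \frac{1}{3} \rceil$. I do not anticipate any serious obstacle in this proof; the substantive work was already done in proving Theorem~\ref{thm:qdominatingiff} and Lemma~\ref{lem:minimaldominating}. The main point to be careful about is simply the observation that a minimum dominating set is minimal (so that the lemma is applicable) together with the clean reduction of the lower bound to the domination number via the corollary.
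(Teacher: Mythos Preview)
Your proposal is correct and follows essentially the same approach as the paper: both directions are obtained by combining Corollary~\ref{cor:02=>dominating} with Lemma~\ref{lem:minimaldominating} and the classical value $\gamma(P_n)=\lceil n/3\rceil$. Your explicit remark that a minimum dominating set is minimal, and your separate treatment of $n=1$ (which Lemma~\ref{lem:minimaldominating} does not cover), are small points of extra care that the paper leaves implicit.
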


\begin{proof}
	We will first prove that $PQ_2(P_n) \geq \lceil \frac{n}{3} \rceil$ and then prove that $PQ_2(P_n) \leq \lceil \frac{n}{3} \rceil$.
	
	($\geq$) From \cite{Chartrand:}, the domination number of a path $P_n$ is $\lceil \frac{n}{3} \rceil$, $n \geq 1$. By Corollary\ref{cor:02=>dominating}, we know that every nontrivial $0_2$-invoking subset of a graph is also a dominating set. Thus, $PQ_2(P_n) \geq \lceil \frac{n}{3} \rceil$.

	($\leq$) By Lemma~\ref{lem:minimaldominating}, we know that every minimal dominating set of a path, $P_n$, is also a $0_2$-invoking subset of $V(P_n)$. Therefore, $PQ_2(P_n) \leq \lceil \frac{n}{3} \rceil$.

\end{proof}

Let $J_n$ represent the number of $0_2$-invoking subsets that exist on $P_n$. We will now count all $0_2$-invoking subsets on a path with $n \geq 2$ vertices.

We will label $P_n$ to have vertices $v_1$, $v_2$, $v_3$, $\dots$, $v_n$.

\begin{lemma}\label{lem:vnvn-1}
	Let $H \subset V(P_n) = \{v_1,v_2, \dots ,v_{n-1},v_n\}$, $n \geq 2$ be proper, non-trivial, and $0_2$-invoking. Then $v_n \in H$ if and only if $v_{n-1} \in V(G) \setminus H$.  	
\end{lemma}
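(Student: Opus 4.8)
The plan is to replace the hypothesis that $H$ is $0_2$-invoking with the cleaner combinatorial condition that $H$ is CCD, via Theorem~\ref{thm:qdominatingiff}, and then to exploit the single feature that distinguishes $v_n$ on a path: it is a leaf, so its only neighbour is $v_{n-1}$. Throughout I would write $deg_{V(P_n)\setminus H}(v)$ for the number of neighbours of $v$ lying outside $H$, matching the notation already in use.

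For the forward direction ($\Rightarrow$), suppose $v_n \in H$ and, for contradiction, that $v_{n-1}\in H$ as well. Then $v_n,v_{n-1}$ form an adjacent pair inside $H$, so CCD condition (i) applies to them. Since $v_n$ is a leaf whose unique neighbour $v_{n-1}$ lies in $H$, we have $deg_{V(P_n)\setminus H}(v_n)=0$, and condition (i) forces $deg_{V(P_n)\setminus H}(v_{n-1})=0$ as well. Because the neighbours of $v_{n-1}$ are $v_n\in H$ and $v_{n-2}$, this pushes $v_{n-2}$ into $H$. I would then run this as a downward induction along the path: once $v_{n-1},v_{n-2}\in H$ with $deg_{V(P_n)\setminus H}(v_{n-1})=0$, condition (i) gives $deg_{V(P_n)\setminus H}(v_{n-2})=0$, which forces $v_{n-3}\in H$, and so on until $v_1\in H$. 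The outcome is $H=V(P_n)$, contradicting that $H$ is proper. (The base case $n=2$ is immediate, since $v_2,v_1\in H$ already gives $H=V(P_2)$.)

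For the reverse direction ($\Leftarrow$), rather than repeating a mirror-image computation I would pass to the complement. By Corollary~\ref{cor:iffCCD}, the set $H':=V(P_n)\setminus H$ is again $0_2$-invoking, and it is again proper and non-trivial precisely because $H$ is non-trivial and proper. Applying the forward direction already established to $H'$ gives $v_n\in H'\Rightarrow v_{n-1}\notin H'$; rewriting in terms of $H$ this reads $v_n\notin H\Rightarrow v_{n-1}\in H$, whose contrapositive is exactly the desired implication $v_{n-1}\notin H\Rightarrow v_n\in H$.

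The only delicate point, and the step I expect to need the most care, is the downward propagation in the forward direction: at each stage I must verify that CCD is being applied to a genuine adjacent pair both of whose members are already known to lie in $H$, and that the induction reaches the far leaf $v_1$ so as to force $H$ to exhaust $V(P_n)$. Everything else reduces to the degree-one observation about leaves and the complement symmetry, both of which are routine.
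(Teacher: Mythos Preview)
Your proof is correct and follows essentially the same approach as the paper: both translate $0_2$-invoking to CCD via Theorem~\ref{thm:qdominatingiff}, use the leaf property of $v_n$ to force propagation along the path until $H=V(P_n)$, and handle the converse by complement symmetry. The only cosmetic differences are that the paper compresses your explicit downward induction into a single connectedness-of-$P_n$ remark, and it spells out the mirror argument for $(\Leftarrow)$ rather than citing Corollary~\ref{cor:iffCCD} as you do.
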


\begin{proof}
	Let $H \subset V(P_n) = \{v_1,v_2, \dots v_{n-1},v_n\}$ be $0_2$-invoking, proper and nontrivial. 
	($\Rightarrow$) Suppose first that $v_n \in H$. We know that $v_{n-1}$ is the only neighbour of $v_n$ in $P_n$. If $v_{n-1} \in H$, then $v_n$ would be adjacent to 0 vertices in $V(P_n) \setminus V(H)$ and thus, since $H$ is $0_2$-invoking, every vertex in the same connected component as $v_n$ in $G|_H$ would be adjacent to 0 vertices in $V(G) \setminus H$. Since $P_n$ is connected, this implies that $H$ is not a proper subset of $V(P_n)$ which is a contradiction. Thus, if $v_n \in H$, then $v_{n-1} \in V(G) \setminus H$.
	
	\noindent($\Leftarrow$) Suppose now that $v_{n-1} \in V(G) \setminus H$. Then if $v_n \in V(G) \setminus H$, it would be adjacent to 0 vertices in $H$ and thus, since $H$ is $0_2$-invoking, every vertex in the same connected component as $v_n$ in $G|_H$ would be adjacent to 0 vertices in $H$. Since $P_n$ is connected, this implies that $H$ is the trivial subset of $V(G)$ which is a contradiction. Thus, if $v_{n-1} \in V(G) \setminus H$, then $v_n \in H$.   
\end{proof}

\begin{theorem}\label{thm:qdominatingpath}
	$J_{n} = J_{n-1} + J_{n-2} - 2$, for $n \geq 3$, with $J_1 = 2$ and $J_2 = 4$.
\end{theorem}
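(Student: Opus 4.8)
The plan is to invoke Theorem~\ref{thm:qdominatingiff} to replace the dynamical condition ``$0_2$-invoking'' by the purely combinatorial condition CCD, and then count CCD subsets of $P_n$ directly. The two improper subsets $\emptyset$ and $V(P_n)$ are always CCD (both defining clauses are vacuous) and are distinct for every $n \geq 1$; so if I write $A_n$ for the number of \emph{proper, nontrivial} CCD subsets of $P_n$, then $J_n = A_n + 2$. Under this substitution the claimed identity $J_n = J_{n-1} + J_{n-2} - 2$ is equivalent to the clean Fibonacci-type recurrence $A_n = A_{n-1} + A_{n-2}$, and the stated base values become $A_1 = 0$ and $A_2 = 2$, which I would verify by hand. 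Thus the whole problem reduces to proving $A_n = A_{n-1} + A_{n-2}$ for $n \geq 3$.

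To make $A_n$ computable I would first pin down exactly which subsets it counts. Encode $H \subseteq V(P_n)$ as a binary word $s_1 \cdots s_n$ with $s_i = 1$ iff $v_i \in H$. I claim a proper nontrivial $H$ is CCD if and only if (a) every maximal run of equal letters has length at most $2$, and (b) the runs containing $s_1$ and $s_n$ each have length $1$, i.e. $s_1 \neq s_2$ and $s_{n-1} \neq s_n$. The endpoint condition (b) is exactly Lemma~\ref{lem:vnvn-1} together with its mirror image at the left end, which holds since $P_n$ is symmetric. For (a) I would read the CCD conditions run by run: an interior run of length $2$ has each of its two vertices adjacent to exactly one vertex of the complement, so it is harmless; whereas any run of length $\geq 3$ produces an adjacent monochromatic pair, an end vertex of the run versus its interior neighbour, whose numbers of complementary neighbours are $1$ and $0$, violating CCD unless the run is all of $P_n$ (the trivial case already excluded). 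The converse is immediate: once (a) and (b) hold, the only adjacent monochromatic pairs are the two vertices of an interior length-$2$ run, and those always satisfy CCD.

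With this description a valid word is simply an alternating sequence of runs, each of length $1$ or $2$, whose first and last runs have length $1$; fixing the colour of the first run contributes the factor $2$. I would finish with a bijective recurrence on the last run, which (b) forces to be a singleton $\{v_n\}$. If the preceding run, the one ending at $v_{n-1}$, also has length $1$, then deleting $v_n$ yields a valid word of length $n-1$, and appending the opposite letter inverts this; these account for $A_{n-1}$ words. If that preceding run has length $2$, then deleting the last two letters yields a valid word of length $n-2$ whose truncated final run is now a singleton, and the inverse appends a repeated letter followed by its opposite; these account for $A_{n-2}$ words. The two cases are exhaustive and disjoint, giving $A_n = A_{n-1} + A_{n-2}$ and hence $J_n = J_{n-1} + J_{n-2} - 2$; the value $A_1 = 0$ correctly records that for $n = 3$ no valid word can end in a length-$2$ run, since such a run would touch the left endpoint. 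I expect the characterization in the middle paragraph, in particular establishing that interior length-$2$ runs are harmless while runs of length $\geq 3$ and endpoint-touching length-$2$ runs are fatal, to be the main obstacle; once it is in place the counting is routine.
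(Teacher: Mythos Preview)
Your proposal is correct and follows essentially the same architecture as the paper's proof: both reduce to CCD via Theorem~\ref{thm:qdominatingiff}, peel off the two improper subsets (your $A_n = J_n - 2$), invoke Lemma~\ref{lem:vnvn-1} at the end of the path, and then split according to whether the block containing $v_{n-1}$ has length $1$ or $2$, deleting one or two terminal vertices accordingly to land in $P_{n-1}$ or $P_{n-2}$.

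The one genuine difference is that you first prove a global structural lemma---proper nontrivial CCD subsets of $P_n$ are exactly the binary words whose maximal runs all have length $\leq 2$ with the two end runs of length $1$---and then run the bijection on that clean combinatorial class. The paper instead argues each deletion preserves CCD directly from the definition, case by case, and handles the colour symmetry with explicit factors of $\tfrac{1}{2}$ (Cases~1--4 of Table~\ref{tab:colouringstable3}). Your intermediate characterization absorbs that symmetry and makes both the exhaustiveness and the inverse maps transparent; it also immediately explains why $A_1 = 0$ is the right base value for the $n=3$ step. The trade-off is that you must justify the characterization carefully (runs of length $\geq 3$ fail via an end-of-run vertex against its interior neighbour; endpoint-touching runs of length $2$ fail via the leaf), whereas the paper avoids stating it by checking only what each case needs.
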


\begin{proof}
	Note first that we are including the trivial and improper cases, so as to count every $0_2$-invoking set on $P_n$. We begin with the initial values. The path with only one vertex cannot send chips because it has no edges. Thus, whether the lone vertex is in the perturbation subset or not, the chosen set is $0_2$-invoking. So, $P_1$ has two $0_2$-invoking subsets: $\emptyset$ and $V(P_1)$. On $P_2$, a perturbation of any subgraph will return to the fixed configuration after another step. Thus, $P_2$ has four $0_2$-invoking subsets.	
	
	Trivially, the empty subset and the entire vertex set are $0_2$-invoking in $P_n$. We will take note of this and move forward counting the $0_2$-invoking subgraphs that are both nonempty and have nonempty complement. 
	
	We will view the problem of partitioning the vertices of a path into $H$ and $V(G) \setminus H$ as a colouring problem, colouring the vertices of $P_n, n \geq 2,$ red if they are in $H$ and blue if they are in $V(G) \setminus H$. Suppose we have $P_n$ coloured in such a way that $H$ (and thus, also $V(G) \setminus H$) is a $0_2$-invoking subset. Suppose also that at least one vertex is red and at least one vertex is blue. We will now count all such possible colourings and we will refer to these as \textbf{$0_2$-invoking colourings}.
	
	By Lemma~\ref{lem:vnvn-1}, we know that $v_n$ and $v_{n-1}$ must be different colours, see Figure~\ref{fig:coloursFibonacci2}.

	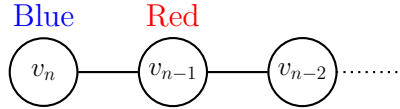
\begin{figure}[H]
		\centering

		\begin{tikzpicture}[-,-=stealth', auto,node distance=1.5cm,
		thick,scale=0.6, main node/.style={scale=0.6,circle,draw, minimum size=1.5cm, font=\sffamily\Large\bfseries}]

		\node[main node, label={[blue]90:Blue}]   (1)                                          {$v_{n}$};
		\node[main node, label={[red]90:Red}]     (2) 	    [right=0.8cm of 1]    	         {$v_{n-1}$};
		\node[main node, label={[blue]90:}]   (3)         [right=0.8cm of 2]               {$v_{n-2}$};
		\node[draw=none, fill=none] (1000)                    [right=0.8cm of 3]               {};

		\draw  (1) edge (2) (2) edge (3);	
		
		\draw[dotted] (1000) edge (3);
		
		\end{tikzpicture} 
		\caption{The two vertices, $v_{n}$ and $v_{n-1}$ must have different colours since they are adjacent to different numbers of blue vertices.}
		\label{fig:coloursFibonacci2}
		\FloatBarrier
	\end{figure}
	
	Suppose $P_n$ has a $0_2$-invoking colouring. Look at the colours assigned to the final three vertices: $v_n$, $v_{n-1}$, and $v_{n-2}$. By Lemma~\ref{lem:vnvn-1}, we are able to exclude some possible colourings of these final three vertices. See Table~\ref{tab:colouringstable1}.

	\begin{table}[H] 
		\centering
		\begin{tabular}{|c|c|}
			\hline
			Colouring of $v_nv_{n-1}v_{n-2}$ & $\#$ of $0_2$-invoking colourings\\
			\hline
			$RRR \cdot \cdot \cdot$  & 0\\
			\hline
			$RRB \cdot \cdot \cdot$  & 0\\
			\hline
			$RBR \cdot \cdot \cdot$  & ?\\
			\hline
			$RBB \cdot \cdot \cdot$  & ?\\
			\hline
			$BRR \cdot \cdot \cdot$  & ?\\
			\hline
			$BRB \cdot \cdot \cdot$  & ?\\
			\hline
			$BBR \cdot \cdot \cdot$  & 0\\
			\hline
			$BBB \cdot \cdot \cdot$  & 0\\
			\hline	
		\end{tabular}
		\caption{Colourings of the last three vertices of a path}
		\label{tab:colouringstable1}
	\end{table}
	
	\noindent For the four remaining possible colourings of $v_n$, $v_{n-1}$, and $v_{n-2}$, we will develop a recurrence relation, building on values from smaller paths.
	
	\noindent \textbf{Case 1:} Suppose $P_n$ has a $0_2$-invoking colouring in which $v_n$ is red, $v_{n-1}$ is blue, and $v_{n-2}$ is red. In Theorem~\ref{thm:qdominatingiff}, it is shown that a subset of the vertices of a graph is $0_2$-invoking if and only if it is CCD and in Corollary~\ref{cor:iffCCD}, it is shown that a subset of the vertices of a graph is $0_2$-invoking if and only if its complement is $0_2$-invoking as well. So a colouring is $0_2$-invoking if and only if the two colour classes are CCD. Clearly if we were to remove $v_n$ from this colouring, yielding a colouring on $P_{n-1}$, the resulting colouring would be CCD since $v_{n-1}$, the only vertex which was adjacent to $v_n$, is not adjacent to any other blue vertices. Thus, for every $0_2$-invoking colouring of $P_n$ in which $v_n$ is red, $v_{n-1}$ is blue, and $v_{n-2}$ is red, there exists exactly one $0_2$-invoking colouring of $P_{n-1}$ in which $v_{n-1}$ is blue and $v_{n-2}$ is red. Since there is no fundamental difference between the colours red and blue, and the final two vertices must have opposing colours by Lemma~\ref{lem:vnvn-1}, the number of $0_2$-invoking colourings of $P_{n-1}$ in which $v_{n-1}$ is blue and $v_{n-2}$ is red is equal to half of the total number of $0_2$-invoking colourings of $P_{n-1}$. The number of $0_2$-invoking colourings of $P_{n-1}$ is equal to $J_{n-1} - 2$ (remembering to account for the improper and trivial cases which are $0_2$-invoking but are not defined to be $0_2$-invoking colourings). Thus, the number of $0_2$-invoking colourings of $P_n$ in which $v_n$ is red, $v_{n-1}$ is blue and $v_{n-2}$ is red is equal to $\frac{1}{2}(J_{n-1} - 2) = \frac{1}{2}J_{n-1} - 1$.
	
	\noindent \textbf{Case 2:} Suppose $P_n$ has a $0_2$-invoking colouring in which $v_n$ is blue, $v_{n-1}$ is red, and $v_{n-2}$ is blue. Since there is no fundamental difference between the colours red and blue, we know that there are also $\frac{1}{2}J_{n-1} - 1$ $0_2$-invoking colourings of this form on $P_n$.

	\begin{table}[H] 
		\centering
		\begin{tabular}{|c|c|}
			\hline
			Colouring of $v_nv_{n-1}v_{n-2}$ \dots & $\#$ of $0_2$-invoking colourings\\
			\hline
			$RRR \cdot \cdot \cdot$  & 0\\
			\hline
			$RRB \cdot \cdot \cdot$  & 0\\
			\hline
			$RBR \cdot \cdot \cdot$  & $\frac{1}{2}J_{n-1} - 1$\\
			\hline
			$RBB \cdot \cdot \cdot$  & \\
			\hline
			$BRR \cdot \cdot \cdot$  & \\
			\hline
			$BRB \cdot \cdot \cdot$  & $\frac{1}{2}J_{n-1} - 1$\\
			\hline
			$BBR \cdot \cdot \cdot$  & 0\\
			\hline
			$BBB \cdot \cdot \cdot$  & 0\\
			\hline	
		\end{tabular}
		\caption{Colourings of the last three vertices of a path}
		\label{tab:colouringstable2}
	\end{table}
	
	\noindent \textbf{Case 3:} Suppose $P_n$ has a $0_2$-invoking colouring in which $v_n$ is red, and both $v_{n-1}$ and $v_{n-2}$ are blue. By Theorem~\ref{thm:qdominatingiff}, we know that both colour sets are CCD. So we know that $v_{n-2}$ must be adjacent to a red vertex. Thus, $v_{n-3}$ is red. However, we do not know whether $v_{n-4}$ is red or blue. We have no knowledge of the remainder of the colours except that both colour sets are CCD. If we were to remove $v_n$ and $v_{n-1}$, the resulting colouring of $P_{n-2}$ would be $0_2$-invoking because the only vertex adjacent to either of these vertices is $v_{n-2}$, and in the resulting colouring of $P_{n-2}$, $v_{n-2}$ is not adjacent to any other blue vertices. Thus, the number of $0_2$-invoking colourings of $P_n$ in which $v_n$ is red, and both $v_{n-1}$ and $v_{n-2}$ are blue is equal to the number of $0_2$-invoking colourings of $P_{n-2}$ in which $v_{n-2}$ is blue and $v_{n-3}$ is red. Since there is no fundamental difference between the colours red and blue, and the final two vertices must have opposing colours by Lemma~\ref{lem:vnvn-1}, the number of $0_2$-invoking colourings of $P_{n-2}$ in which $v_{n-2}$ is blue and $v_{n-3}$ is red is equal to half of the total number of $0_2$-invoking colourings of $P_{n-2}$. The number of $0_2$-invoking colourings of $P_{n-2}$ is equal to $J_{n-2} - 2$ (remembering to account for the improper and trivial cases which are $0_2$-invoking but are not defined to be $0_2$-invoking colourings). Thus, the number of $0_2$-invoking colourings of $P_n$ in which $v_n$ is red, and both $v_{n-1}$ and $v_{n-2}$ are blue is equal to $\frac{1}{2}(J_{n-2} - 2) = \frac{1}{2}J_{n-2} - 1$.
	
	\noindent \textbf{Case 4:} Suppose $P_n$ has a $0_2$-invoking colouring in which $v_n$ is blue, and both $v_{n-1}$ and $v_{n-2}$ are blue. Since there is no fundamental difference between the colours red and blue, we know that there are also $\frac{1}{2}J_{n-2} - 1$ $0_2$-invoking colourings of this form on $P_n$.

	\begin{table}[H] 
		\centering
		\begin{tabular}{|c|c|}
			\hline
			Colouring of $v_nv_{n-1}v_{n-2}$ \dots & $\#$ of $0_2$-invoking colourings\\
			\hline
			$RRR \cdot \cdot \cdot$  & 0\\
			\hline
			$RRB \cdot \cdot \cdot$  & 0\\
			\hline
			$RBR \cdot \cdot \cdot$  & $\frac{1}{2}J_{n-1} - 1$\\
			\hline
			$RBB \cdot \cdot \cdot$  & $\frac{1}{2}J_{n-2} - 1$\\
			\hline
			$BRR \cdot \cdot \cdot$  & $\frac{1}{2}J_{n-2} - 1$\\
			\hline
			$BRB \cdot \cdot \cdot$  & $\frac{1}{2}J_{n-1} - 1$\\
			\hline
			$BBR \cdot \cdot \cdot$  & 0\\
			\hline
			$BBB \cdot \cdot \cdot$  & 0\\
			\hline	
		\end{tabular}
		\caption{Colourings of the last three vertices of a path}
		\label{tab:colouringstable3}
	\end{table}

	\noindent So $J_{n} - 2 = J_{n-1} - 2 + J_{n-2} - 2$. Therefore $J_{n} = J_{n-1} + J_{n-2} - 2$. 
\end{proof}

\begin{corollary}
	Let $F_i$ be the $i^{th}$ Fibonacci number with $F_0 = 0$, $F_1 = 1$, and $F_i = F_{i-1} + F_{i-2}$. Then $J_{k+1} = 2(F_{k}+1)$.
\end{corollary}	     

\begin{proof}	     
	Note that 
	
	$J_1 = 2(F_0+1) = 2$;
	
	$J_2 = 2(F_1+1) = 4$; 
	
	$J_3 = 2(F_2+1 )= 4$. 
	
	Assume for $2 \leq i \leq k$ that $J_i = 2(F_{i-1}+1)$. Then
	\begin{align*}
	J_{k+1} &= J_{k}+J_{k-1}-2\\ 
	&= 2(F_{k-1}+1+F_{k-2}+1) - 2\\
	&=2(F_{k-1}+F_{k-2} + 1)\\
	&=2(F_{k}+1).\\
	\end{align*}
\end{proof}

\section{Conclusion}

The results in this paper revolve around the broad question ''How can we categorize those perturbation subsets that lead to the 0-configuration being eventually restored after some amount of steps?" The definition of a $0_2$-invoking subset and the characterization of all $0_2$-invoking subsets as CCD (Theorem~\ref{thm:qdominatingiff}) begins to paint this picture for us. This naturally gives rise to Question~\ref{qst:0-invoking}, which we restate here: 

\textbf{Question~\ref{qst:0-invoking}:} Is there a graph such that some subset of its vertex set is 0-invoking but not $0_2$-invoking.

In \cite{mullen:}, we develop a method of counting and characterizing every configuration on a path that will lead to the 0-configuration in the next step. This serves as the first step in what may be a method of answering Question~\ref{qst:0-invoking}, because characterizing all of the configuration sequences that can eventually lead to the 0-configuration would inevitably determine whether or not any such firing set exists (at least on a path).

In its most general form, a perturbation is just some kind of disruption to the stack sizes of a configuration. In this paper, we analyzed when a perturbation of the $0$-configuration eventually returned to the $0$-configuration. However, a more general question would be ``Which configurations can return after being perturbed?" Does it matter how many vertices are in a perturbation set when answering this question?

\end{document}